\numberwithin{equation}{section}
\newtheorem{theorem}{Theorem}[section]
\newtheorem{proposition}[theorem]{Proposition}
\newtheorem{lemma}[theorem]{Lemma}
\newtheorem{corollary}[theorem]{Corollary}
\newtheorem{remark}[theorem]{Remark}
\newtheorem{example}[theorem]{Example}
\theoremstyle{definition}
\newtheorem{definition}[theorem]{Definition}
\begin{document}

%
%

\title[On the Number of Weighted Zero-sum Subsequences]{On the Number of Weighted Zero-sum Subsequences}

\author{A.  Lemos, B.K. Moriya, A.O. Moura and A.T. Silva$^{\ast}$}
\thanks{$\ast$ The authors were partially supported by FAPEMIG APQ-02546-21 and RED-00133-21.}
\address{Departamento de Matem\'{a}tica, Universidade Federal de Vi\c cosa, Vi\c cosa-MG, Brazil}

\email{abiliolemos@ufv.br\\bhavinkumar@ufv.br\\allan.moura@ufv.br}
\email{anderson.tiago@ufv.br}

\keywords{Finite abelian group, Sequences and sets, Extremal $0$-complete sequence}

\subjclass[2010]{20K01,11B75}

\begin{abstract}
Let $G$ be a finite additive abelian group with exponent $d^kn, d,n>1,$ and $k$ a positive integer. For $S$ a sequence over $G$ and $A=\{1,2,\ldots,d^kn-1\}\setminus\{d^kn/d^i:i\in[1,k]\}, $ we investigate the lower bound of the number $N_{A,0}(S)$, which denotes the number of $A$-weighted zero-sum subsequences of $S.$ In particular, we prove that $N_{A,0}(S)\ge 2^{|S|-D_A(G)+1},$ where $D_A(G)$ is the $A$-weighted Davenport Constant. We also characterize the structures of the extremal sequences for which equality holds for some groups.
\end{abstract}

\maketitle

\section{Introduction}
\hspace{0.6cm}Let $G$ be a finite additive abelian group with exponent
$n$ and $S$ be a sequence over $G$. The enumeration of subsequences
with certain prescribed properties is a classical topic in Combinatorial
Number Theory going back to Erd\H{o}s, Ginzburg and Ziv (see \cite{EGZ,Ger1,Ger2})
who proved that $2n-1$ is the smallest integer, such that every sequence
$S$ over a cyclic group $C_{n}$ has a subsequence of length $n$
with zero-sum. This raises the problem of determining the smallest
positive integer $l$, such that every sequence $S=g_{1}\cdots g_{l}$
has a nonempty zero-sum subsequence. Such an integer $l$ is called
the {\it Davenport constant of $G$} (see \cite{Dav, OlsonI}), denoted by $D(G)$,
which is still unknown for wide class of groups. In an analogous manner, for a nonempty subset $A\subset \mathbb{Z}$, Adhikari \textit{et. al.} defined, see \cite{Adh1}, an $A$-weighted Davenport constant, denoted by $D_A(G)$, to be a smallest $t\in \mathbb{N}$ such that every sequence $S$ over $G$ of length $t$ has nonempty $A$-weighted zero-sum subsequence.

For any $g$ of $G$, let $N_{A,g}(S)$ (when $A=\{1\}$ we write
$N_{g}(S)$) denote {\it the number of weighted subsequences} $T=\prod_{i\in I}g_{i}$
of $S=g_{1}\cdots g_{l}$ such that $\sum_{i\in I}a_{i}g_{i}=g$,
where $I\subseteq\left\{ 1,\ldots,l\right\} $ is a nonempty subset
and $a_{i}\in A$. In 1969, Olson, see \cite{OlsonII}, proved
that $N_{0}(S)\geq2^{|S|-D(G)+1}$ for every sequence $S$ over $G$
of length $|S|\geq D(G)$. Subsequently, several authors, including
\cite{bal,Bia,Cao,Fur,Gao2,Gao3,Gao5,Gao6,Gry1,Gry2,Gui,kis} obtained a huge variety of results on the number of subsequences with
prescribed properties. In 2011, Chang {\it et al.}, see \cite{Chang}, found the lower bound
of $N_{g}(S)$ for any arbitrary $g$ and classify the extremal sequences for $|G|$ odd. Recently, Lemos {\it et al.}, see \cite{losm}, found the lower bound
of $N_{A,0}(S)$ for $A=\{1,\dots,n-1\}$ and classify the extremal sequences for $|G|$ odd. Here we prove that $N_{A,0}(S)\geq2^{|S|-D_{A}(G)+1}$, when $A=\{1,2,\ldots,d^kn-1\}\setminus\{d^kn/d^i:i\in[1,k]\},$ where $k$ is a positive integer. Besides, we classify the sequences such that $N_{A,0}(S)=2^{|S|-D_{A}(G)+1}$, where $G=H\oplus C_{d^kn}^r,$ with $n$ odd, $\exp(H)\mid d^k,$ $\gcd(d,n)\leq d-1$ and $d^kn\geq 6$.


\section{Notations and terminologies}

\global\long\def\labelenumi{(\roman{enumi})}

\hspace{0.6cm} In this section, we will introduce some notations and terminologies. Notations and terminologies are in accordance with \cite{losm}. Let $\mathbb{N}_{0}$ be {\it the set
of non-negative integers}. For integers $a,b\in\mathbb{N}_{0}$, we
define $[a,b]=\left\{ x\in\mathbb{N}_{0}:a\leq x\leq b\right\} $.

For a sequence 
\[
S=\prod_{i=1}^{m}g_{i}\in\mathcal{F}\left(G\right),
\]
where $\mathcal{F}\left(G\right)$ is the {\it free abelian monoid} with
basis $G$, a \textit{subsequence} $T=g_{i_{1}\cdots}g_{i_{k}}$ of $S$, with
$I_{T}=\{i_{1},\ldots,i_{k}\}\subseteq[1,m]$ is denoted by $T|S$;
we identify two subsequences $S_{1}$ and $S_{2}$ if $I_{s_{1}}=I_{s_{2}}$.
Given subsequences $S_{1},\ldots,S_{r}$ of $S$, we define \textit{gcd$(S_{1},\ldots,S_{r})$} to be the sequence indexed by 
$I_{S_{1}}\cap\cdots\cap I_{S_{r}}.$ We say that two subsequences
$S_{1}$ and $S_{2}$ are {\it disjoint} if $(S_{1},S_{2})=\lambda$, where
$\lambda$ refers to the \textit{empty sequence}. If $S_{1}$ and $S_{2}$
are disjoint, then we denote by $S_{1}S_{2}$ the subsequence with
set index $I_{s_{1}}\cup I_{s_{2}}$; if $S_{1}|S_{2}$; we denote
by $S_{2}S_{1}^{-1}$ the subsequence with set index $I_{s_{2}}\setminus I_{s_{1}}$.
Moreover, we define 
\begin{enumerate}
\item $\left|S\right|=m$ {\it the length of $S$}.
\item an {\it $A$-weighted sum} is a sum of the form $\sigma^{\mathbf{a}}\left(S\right)=\sum_{i=1}^{m}a_{i}g_{i}$, with fixed
$\mathbf{a}=a_{1}\cdots a_{m}\in\mathcal{F}(A)$, where $\mathcal{F}\left(A\right)$
is the free abelian monoid with basis $A$. When $A=[1,n-1]$, we call $S$ a {\it fully weighted sequence}.  
\item $\sum_{A}\left(S\right)=\left\{ \sum_{i\in I}a_{i}g_{i}:\emptyset\neq I\subseteq\left[1,m\right]\mbox{ and }a_{i}\in A\right\} $, a {\it set of nonempty $A$-weighted subsums of $S$}. 
\end{enumerate}
According to the above definitions, we adopt the convention that $\sigma^{\mathbf{a}}\left(\lambda\right)=0$,
for any $\mathbf{a}\in\mathcal{F}(A)$. For convenience, we define
$\sum_{A}^{\bullet}\left(S\right)=\sum_{A}\left(S\right)\cup\left\{ 0\right\} $.

The sequence $S$ is called 
\begin{enumerate}
\item an {\it $A$-weighted zero-sum free sequence} if $0\notin\sum_{A}\left(S\right)$ and
\item an {\it $A$-weighted zero-sum sequence} if $\sigma^{\mathbf{a}}\left(S\right)=0$
for some $\mathbf{a}\in\mathcal{F}(A)$. 
\end{enumerate}
When $A=\{1\},$ we call $S$ {\it zero-sum free sequence} and {\it zero-sum sequence}, respectively.
For an element $g\in G$, let 
\[
N_{A,g}\left(S\right)=\left|\left\{ I\subseteq\left[1,m\right]:\sum_{i\in I}a_{i}g_{i}=g,\:a_{i}\in A\right\} \right|
\]
denote {\it the number of subsequences $T$ of $S$} with $\sigma^{\mathbf{a}}\left(T\right)=g$
for some $\mathbf{a}\in\mathcal{F}(A)$. 
\begin{definition}
Let $n$ be the exponent of $G$, $g\in G$, $A\subseteq\mathbb{Z}\backslash\left\{ kn:k\in\mathbb{Z}\right\} $ and $S\in\mathcal{F}\left(G\right)$. We say $S$ is {\it $g$-complete sequence with weight in $A$} if $N_{A,g}\left(S\right)\geq2^{\left|S\right|-D_{A}\left(G\right)+1}$. We call $S$ an {\it extremal $g$-complete sequence with respect to $A$} if $N_{A,g}\left(S\right)=2^{\left|S\right|-D_{A}\left(G\right)+1}$.
Let us denote $C_{A,g}\left(\mathcal{F}\left(G\right)\right)$ as
the {\it set of all $g$-complete sequences with respect to $A$} and $EC_{A,g}\left(\mathcal{F}\left(G\right)\right)$
as the {\it set of all extremal $g$-complete sequences with respect to $A$}. 
\end{definition}
\vspace{0cm}

\begin{definition}
Let $n$ be the exponent of $G$ and $A\subseteq\mathbb{Z}\backslash\left\{ kn:k\in\mathbb{Z}\right\} $.
We say $G$ is a {\it $0$-complete group with respect to $A$} if $\mathcal{F}\left(G\right)=C_{A,0}\left(\mathcal{F}\left(G\right)\right)$. 
\end{definition}
When $A=\left\{ 1\right\} $, Olson \cite{OlsonII} proved that all finite
abelian groups are $0$-complete with respect to $A$. Chang {\it et al.} \cite{Chang}
proved, that, when $A=\left\{ 1\right\} $, if $g\in\sum_{A}^{\bullet}\left(S\right)$,
then $S\in C_{A,g}\left(\mathcal{F}\left(G\right)\right)$ and, if
$S$ is extremal $h$-complete sequence with respect to $A$ for some
$h\in G$, then $S$ is $g$-complete sequence with respect to $A$
for all $g\in G$. Moreover, they classified the sequences in $EC_{A,0}\left(\mathcal{F}\left(G\right)\right)$
when $G$ is a group of odd order.

\begin{remark}
 Take an $A$-weighted zero-sum free sequence $U$
over $G$ with $|U|=D_{A}(G)-1$. Thus, for $S=U0^{|S|-D_{A}(G)+1}$
and for any $g\in\sum_{A}^{\bullet}\left(U\right)$, we have $S\in C_{A,g}\left(\mathcal{F}\left(G\right)\right)$
and $S\in EC_{A,0}\left(\mathcal{F}\left(G\right)\right)$.
\end{remark}

We write a finite abelian group $G$ as direct sum $G=H\oplus C_{n}^{r}$
, where $C_{n}^{r}$ denotes $r$ copies of the cyclic group of order $n$ denoted by $C_{n}$ and $H=C_{n_{1}}\oplus\cdots\oplus C_{n_{t}}$ with $1<n_{1}|n_{2}|\cdots|n_{t}|n=exp(G)$
and $n_{t}<n$. 

We have some auxiliary results, which are
as follows. 
\begin{lemma} \label{L1}
\label{lem:13}{[}Theorem 5.2 \cite{marc}{]} Let $G=H\oplus C_{n}^{r}$, where $H=C_{n_{1}}\oplus\cdots\oplus C_{n_{t}}$ with
$1<n_{1}|n_{2}|\cdots|n_{t}|n=exp(G)$ and $n_{t}<n$. Then, $D_{A}(G)=r+1$. 
\end{lemma}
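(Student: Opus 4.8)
The plan is to prove the two inequalities $D_A(G)\ge r+1$ and $D_A(G)\le r+1$ separately, the first being elementary and the second carrying all the content. For the lower bound I would exhibit an explicit $A$-weighted zero-sum free sequence of length $r$: letting $e_1,\dots,e_r$ be generators of the $r$ copies of $C_n$, set $S_0=\prod_{i=1}^r(0,e_i)\in\F(G)$. Every nonempty $A$-weighted subsum of $S_0$ has the form $\bigl(0,\sum_{i\in I}a_ie_i\bigr)$ with $\emptyset\neq I\subseteq[1,r]$ and $a_i\in A$; since the $e_i$ are independent of order $n=\exp(G)$ and $A\subseteq[1,n-1]$ contains no multiple of $n$, each $a_ie_i\neq 0$, whence $\sum_{i\in I}a_ie_i\neq 0$ by independence. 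Thus $0\notin\sum_A(S_0)$, so $S_0$ is $A$-weighted zero-sum free and $D_A(G)\ge |S_0|+1=r+1$.

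For the upper bound I would show that every $S=g_1\cdots g_{r+1}\in\F(G)$ admits a nonempty $A$-weighted zero-sum subsequence, after one key reduction. The reduction is an absorption lemma: any $g\in G$ with $\ord(g)<n$ is by itself an $A$-weighted zero-sum, since it suffices to find a multiple of $\ord(g)$ lying in $A$, and as $A$ omits only the $k$ residues $n/d^i$ from $[1,n-1]$ while $[1,n-1]$ contains many multiples of $\ord(g)$, such a weight exists (this is where the size hypothesis $\exp(G)\ge 6$ and the arithmetic of $n$, $d$, $k$ enter). In particular every $h\in H$ qualifies, because $\ord(h)\mid n_t<n$. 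Hence I may assume that every term $g_i$ has order exactly $n$.

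The heart is an induction on $r$. Let $\rho\colon G\to C_n^r$ be the projection with kernel $H$, so that $\rho(g_1),\dots,\rho(g_{r+1})$ are $r+1$ vectors in the free rank-$r$ module $C_n^r$ over $\Z/n$. If all $\rho(g_i)=0$ the terms lie in $H\oplus C_n^{r-1}$, whose weighted Davenport constant is $r$ by the inductive hypothesis, and we are done; otherwise I would peel off one $C_n$-factor, using an order-$n$ term to clear a coordinate, and reduce to a length-$r$ problem over $H\oplus C_n^{r-1}$. The base case $r=1$ is treated directly; the underlying mechanism is a dependence relation over $\Z/n$, which in the prime case $n=p$ is simply that $r+1$ vectors in an $r$-dimensional $\mathbb{F}_p$-space are linearly dependent, yielding $\sum_{i\in I}a_i\rho(g_i)=0$ with $a_i\neq 0$ and hence $\sum_{i\in I}a_ig_i\in H$, after which the absorption lemma concludes.

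The main obstacle, and the reason the statement is nontrivial, is twofold and intertwined. First, because $\exp(H)\mid n$, one cannot adjust the weights modulo $n$ to correct the $H$-component without simultaneously altering the $C_n^r$-component: the two are genuinely coupled, so any relation must be made to vanish in $H$ and in $C_n^r$ at once. Second, the weights must avoid the $k$ forbidden residues $n/d^i$; indeed, by monotonicity $A\subseteq[1,n-1]$ already forces $D_A(G)\ge r+1$, so the entire content is that deleting these $k$ weights from the fully weighted set (for which $D_{[1,n-1]}(G)=r+1$ is comparatively easy) never obstructs the construction. I expect the crux to be exactly this: showing that whenever a relation would require a forbidden weight, the density of $A$ in $[1,n-1]$ permits a perturbation to an admissible weight that still cancels both components.
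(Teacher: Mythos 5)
The paper offers no proof of this lemma at all---it is imported verbatim as Theorem 5.2 of \cite{marc}---so there is nothing internal to compare against, and I can only assess your argument on its own terms. Two preliminary points. The weight set intended here is the full set $A=[1,n-1]$: that is how the lemma is invoked in the proof of Theorem \ref{t2}. The forbidden residues $n/d^i$ and the size hypothesis $\exp(G)\ge 6$ belong to the weight set of Theorems \ref{t2} and \ref{thm11}, not to this lemma, so the second half of your ``absorption lemma'' and your closing paragraph about perturbing forbidden weights address a different statement. Your lower bound, on the other hand, is correct and is exactly the standard construction (it reappears in the paper's proof of Theorem \ref{t2}).

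The upper bound has two genuine gaps. First, the inductive step ``peel off one $C_n$-factor, using an order-$n$ term to clear a coordinate'' is not available over $\Z/n\Z$ for composite $n$: an element of order $n$ in $C_n^r$ need not have any coordinate that is a unit (e.g.\ $(2,3)\in C_6^2$ has order $6$ but no invertible coordinate), so the elimination cannot be performed; you only justify the mechanism when $n$ is prime, where $n_t<n$ forces $H$ to be trivial, i.e.\ the easy case. Second, and more seriously, the conclusion ``$\sum_{i\in I}a_i\rho(g_i)=0$, hence $\sum_{i\in I}a_ig_i\in H$, after which the absorption lemma concludes'' is a non sequitur: absorption applies to a single term of $S$ lying in $H$, not to a weighted subsum that happens to land in $H$. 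To turn $\sum_{i\in I}a_ig_i=h\in H$ into an honest $A$-weighted zero-sum one must rescale the relation, say by $\ord(h)$, and the new weights $\ord(h)a_i$ may become $\equiv 0\pmod n$, possibly emptying the support $I$; controlling this is precisely the ``coupling'' difficulty you identify in your final paragraph but never resolve. Note that for the only case the paper actually uses ($H$ trivial, $G=C_n^r$) a one-line pigeonhole argument suffices: the homomorphism $(\Z/n\Z)^{r+1}\to(\Z/n\Z)^{r}$ sending $(a_1,\dots,a_{r+1})$ to $\sum a_i\rho(g_i)$ has nontrivial kernel, and any nonzero kernel element yields a relation supported on $I=\{i:a_i\not\equiv 0\}$ with all weights in $[1,n-1]$. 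The case of nontrivial $H$ is the part that genuinely requires the argument of \cite{marc}, and your sketch does not supply it.
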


A subsequence $T$ of $S$ is called an {\it extremal $A$-weighted zero-sum free subsequence} if $|T|=D_A(G)-1$ and $T$ is $A$-weighted zero-sum free.

It is worth mentioning the following important result for the fully weighted 0-complete sequences, which was proved in \cite{losm}.

\begin{theorem}\label{T1}
\label{prop:cpr} All finite abelian group $G$ with exponent $n$
is $0$-complete with respect to $A=\left[1,n-1\right]$. 
\end{theorem}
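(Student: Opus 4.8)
The plan is to argue by induction on $|S|$, the base case $|S|<D_A(G)$ being immediate: there the right-hand side $2^{|S|-D_A(G)+1}\le 1$, while the empty subsequence already gives $N_{A,0}(S)\ge 1$ (by the convention $\sigma^{\mathbf a}(\lambda)=0$). For the inductive step I would fix $|S|\ge D_A(G)$ and try to produce one term $g\mid S$ for which
\[
N_{A,0}(S)\ \ge\ 2\,N_{A,0}(Sg^{-1}),
\]
since then the inductive hypothesis applied to $Sg^{-1}$ (whose length is still $\ge D_A(G)-1$) finishes the proof. Writing $S=(Sg^{-1})g$ and splitting the $A$-weighted zero-sum subsequences of $S$ according to whether they contain $g$, one gets $N_{A,0}(S)=N_{A,0}(Sg^{-1})+|\mathcal E_g|$, where $\mathcal E_g=\{\,I'\subseteq Sg^{-1}: I'g\text{ is }A\text{-weighted zero-sum}\,\}$. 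So the whole problem reduces to exhibiting, for some $g$, an injection from the $A$-weighted zero-sum subsequences of $Sg^{-1}$ into $\mathcal E_g$, i.e. to proving $|\mathcal E_g|\ge N_{A,0}(Sg^{-1})$.

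The first, easy, case is when some term $g\mid S$ has $\ord(g)<n=\exp(G)$. Then $\ord(g)\in A=[1,n-1]$, so for every $A$-weighted zero-sum $I'\subseteq Sg^{-1}$ one may keep a weighting that annihilates $I'$ and weight $g$ by $\ord(g)$; this shows $I'g$ is again $A$-weighted zero-sum. Thus the $A$-weighted zero-sum subsequences of $Sg^{-1}$ sit inside $\mathcal E_g$ and the desired inequality holds with the identity map. (Equivalently, this is the ``peeling'' principle that a length-one weighted zero-sum block doubles the count.)

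The remaining, and main, case is when every term of $S$ has maximal order $n$. Here I would fix an order-$n$ term $g$, use that $\langle g\rangle$ is then a direct summand, $G=\langle g\rangle\oplus G'$, and write each other term as $g_i=c_ig+h_i$ with $c_i\in\mathbb Z_n$, $h_i\in G'$. For $I'\subseteq Sg^{-1}$ set
\[
V(I')=\Big\{\textstyle\sum_{i\in I'}a_ic_i \bmod n \ :\ a_i\in A,\ \sum_{i\in I'}a_ih_i=0 \text{ in } G'\Big\}\subseteq\mathbb Z_n,
\]
the set of $\langle g\rangle$-coordinates reachable by the $G'$-annihilating weightings of $I'$. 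A short computation gives $I'$ is $A$-weighted zero-sum $\iff 0\in V(I')$, and $I'\in\mathcal E_g\iff V(I')\cap(\mathbb Z_n\setminus\{0\})\neq\emptyset$, so the target $|\mathcal E_g|\ge N_{A,0}(Sg^{-1})$ becomes the counting inequality
\[
\big|\{I': V(I')=\{0\}\}\big|\ \le\ \big|\{I': \emptyset\neq V(I')\subseteq \mathbb Z_n\setminus\{0\}\}\big|.
\]
A useful structural observation is that multiplying a $G'$-annihilating weighting by any unit $\lambda\in\mathbb Z_n^{\ast}$ keeps it $G'$-annihilating and still lies in $A^{I'}$, so $V(I')=\lambda V(I')$ for every unit $\lambda$; hence each $V(I')$ is a union of $\gcd$-orbits of $\mathbb Z_n$, and for prime exponent $V(I')\in\{\emptyset,\{0\},\mathbb Z_n\setminus\{0\},\mathbb Z_n\}$.

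The hard part will be this last counting inequality; it is exactly where the full weight set $A=[1,n-1]$ must be exploited, and it is genuinely global — no canonical involution $I'\mapsto I'\triangle W$ works, as small $C_3^2$ examples show. For prime exponent I would reformulate ``$A$-weighted zero-sum'' as the existence of an $\mathbb F_p$-linear dependence with fully supported coefficients among the terms, turning the inequality into a statement about the supports of the kernel of the matrix of terms, which the unit-scaling symmetry above makes tractable. For general exponent I would then induct on $\exp(G)$ (or on $r$ via Lemma \ref{L1}, i.e. $D_A(G)=r+1$) through the filtration $G\supset pG\supset\cdots$, lifting the prime-level count. Establishing this inequality uniformly — equivalently, guaranteeing an order-$n$ term $g$ whose removal halves the count — is the crux on which the whole argument rests.
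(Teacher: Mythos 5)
Your proposal has a genuine gap, and you identify it yourself: the entire content of the theorem in the main case (all terms of order $n$) is compressed into the counting inequality
\[
\bigl|\{I': V(I')=\{0\}\}\bigr|\ \le\ \bigl|\{I': \emptyset\neq V(I')\subseteq \mathbb{Z}_n\setminus\{0\}\}\bigr|,
\]
for which you offer only a plan (an $\mathbb{F}_p$-linear reformulation plus an induction on the exponent), not an argument. The reduction itself is also not obviously sound as a strategy: you need \emph{some} term $g$ whose removal at most halves the count, and nothing in the proposal guarantees such a $g$ exists; the peeling lemma you rely on is exactly as hard as the theorem. So as written this is not a proof.

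The paper does not reprove Theorem \ref{T1} (it is quoted from \cite{losm}), but the proof of the analogous Theorem \ref{0-c} shows the intended mechanism, which is global rather than term-by-term and avoids your counting problem entirely. One fixes a \emph{maximal} $A$-weighted zero-sum free subsequence $T$ of $S$, so $|T|\le D_A(G)-1$, and writes $S=TW$. Maximality gives, for each $g\mid W$, a relation $a_g g=-\sum_{i\in I_g}a_i g_i$ with $I_g\subseteq I_T$ and all weights in $A$. Summing these relations over the terms of an arbitrary subset $U$ of $W$, the combined coefficient on each $g_i$ is some residue mod $n$, and here is the one-line use of the full weight set $A=[1,n-1]$: every nonzero residue lies in $A$, so the combined coefficients are either $0$ (drop that $g_i$) or legal weights. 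Hence every $U\subseteq W$ completes to an $A$-weighted zero-sum subsequence $UV_U$ with $V_U\mid T$, and distinct $U$ give distinct index sets, yielding $2^{|W|}\ge 2^{|S|-D_A(G)+1}$ at once, with no induction and no need to control how the count changes when one element is deleted. I would recommend abandoning the halving strategy and looking for this completion argument; the observation that $A\cup\{0\}$ exhausts $\mathbb{Z}_n$ is the key idea your write-up is missing.
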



In \cite{losm} the authors conjectured that Theorem \ref{T1} holds for any $A.$ In the Section \ref{s3}, we proved that such a theorem is true for $G = H\oplus C_{d^kn}^r,$ with $n$ odd, $\exp(H)|d^k, \gcd(d, n)< d- 1,$  $d^kn<6$ and $A=\{1,2,\ldots,d^kn-1\}\setminus\{d^kn/d^i:i\in[1,k]\},$ where $k$ is a positive integer.

\section{Lower bound}\label{s3}

We start this section by presenting an important theorem.

\begin{theorem}[Adhikari {\it et al}.,Theorem 4.1, item (i) \cite{AGS}]\label{adhi}
Let $G$ be a finite and nontrivial abelian group and let $S\in\mathcal{F}(G)$ be a sequence. If $|S|\geq\log_2|G|+1$ and $G$ is not an elementary $2$-group, then $S$ contains a proper, nontrivial $\{\pm1\}$-weighted zero-sum subsequence.
\end{theorem}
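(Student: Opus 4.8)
The plan is to argue by a pigeonhole count on the \emph{ordinary} subset sums of $S=g_1\cdots g_m$, where $m=|S|$. For $I\subseteq[1,m]$ put $\phi(I)=\sum_{i\in I}g_i\in G$. The crucial observation is that any collision $\phi(I)=\phi(J)$ with $I\neq J$ already produces a $\{\pm1\}$-weighted zero-sum subsequence: cancelling the common part indexed by $I\cap J$ yields $\sum_{i\in I\setminus J}g_i-\sum_{i\in J\setminus I}g_i=0$, so the subsequence indexed by the symmetric difference $I\triangle J$ is a nonempty $\{\pm1\}$-weighted zero-sum, with weight $+1$ on $I\setminus J$ and weight $-1$ on $J\setminus I$. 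Thus the entire problem reduces to locating a suitable collision of $\phi$.

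To force a collision I would simply count. The domain of $\phi$ has $2^m$ elements and the codomain has $|G|$, while the hypothesis $|S|\geq\log_2|G|+1$ gives $2^m\geq 2|G|$. Hence by the pigeonhole principle some value of $\phi$ is attained at least twice, which immediately produces a nonempty $\{\pm1\}$-weighted zero-sum subsequence $T$. The one subtlety, and what I expect to be the main obstacle, is the requirement that $T$ be \emph{proper}, i.e.\ $T\neq S$. Since $I\triangle J=[1,m]$ holds exactly when $J=[1,m]\setminus I$, the only way a collision can fail to give a proper subsequence is when $I$ and $J$ are complementary. So it suffices to exhibit a \emph{non-complementary} collision of $\phi$.

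It is precisely at this point that the hypothesis that $G$ is not an elementary $2$-group enters. Suppose, for contradiction, that every collision of $\phi$ is between complementary sets. Then each fibre of $\phi$ has at most two elements (three pairwise complementary distinct sets would have to coincide), so $2^m\leq 2|G|$; combined with $2^m\geq 2|G|$ this forces equality, and hence every fibre is a complementary pair $\{I,[1,m]\setminus I\}$. In particular $\phi(I)=\phi([1,m]\setminus I)$ for every $I$: taking $I=\emptyset$ gives $\sum_{i=1}^m g_i=0$, and then taking $I=\{k\}$ gives $g_k=-g_k$, that is $2g_k=0$, for every $k$. Consequently the image of $\phi$ is contained in the elementary $2$-group $\langle g_1,\dots,g_m\rangle$; but the fibres partition the $2|G|$ subsets into exactly $|G|$ pairs, so $\phi$ is surjective and $\langle g_1,\dots,g_m\rangle=G$. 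Thus $G$ itself would be an elementary $2$-group, contrary to hypothesis. Therefore a non-complementary collision must exist, and the associated subsequence $T$ is a proper, nontrivial $\{\pm1\}$-weighted zero-sum subsequence of $S$, as required.
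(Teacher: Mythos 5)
Your argument is correct. Note first that the paper itself gives no proof of this statement: it is quoted verbatim from Adhikari, Grynkiewicz and Sun \cite{AGS}, so there is nothing internal to compare against. Your proof is the standard pigeonhole argument behind the bound $D_{\pm}(G)\leq\log_2|G|+1$ (two distinct index sets $I\neq J$ with $\sum_{i\in I}g_i=\sum_{j\in J}g_j$ yield a nontrivial $\{\pm1\}$-weighted zero-sum on $I\triangle J$, and $2^{|S|}\geq 2|G|>|G|$ forces such a collision), and the genuinely nontrivial part of the statement --- that the subsequence can be taken \emph{proper} when $G$ is not an elementary $2$-group --- is handled correctly by your equality analysis: if every collision were complementary, each fibre of $\phi$ would have at most two elements, forcing $2^{|S|}=2|G|$ with every fibre a complementary pair, whence $\sigma(S)=0$, $2g_k=0$ for all $k$, and surjectivity of $\phi$ onto $G$, so that $G=\langle g_1,\dots,g_m\rangle$ would be an elementary $2$-group. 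All steps check out (in particular the observation that three pairwise complementary distinct sets cannot exist, and that $I\triangle J=[1,m]$ exactly for complementary pairs). This is a clean, self-contained replacement for the external citation.
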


To find the lower bound for $N_{A,0}(S),$ with $S\in\mathcal{F}(G),$ we used the value of $D_A(G).$

\begin{theorem}\label{t2}
Let $G=H\oplus C_{d^kn}^r$, where $\exp(H)\mid d^k,$ $\gcd(d,n)\leq d-1$ and $d^kn\geq 6,$ where $k$ is a positive integer. Then $D_A(G)=r+1$, for $A=\{1,2,\ldots,d^kn-1\}\setminus\{d^kn/d^i:i\in[1,k]\}$.
\end{theorem}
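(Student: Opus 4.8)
The plan is to establish the two inequalities $D_A(G)\ge r+1$ and $D_A(G)\le r+1$ separately. Throughout I write $N=d^kn=\exp(G)$ and record two reformulations of the hypotheses that will be used repeatedly: the condition $\gcd(d,n)\le d-1$ is equivalent to $d\nmid n$, and the excluded set $\{d^kn/d^i:i\in[1,k]\}$ equals $\{d^jn:0\le j\le k-1\}$, so that $A=[1,N-1]\setminus\{d^jn:0\le j\le k-1\}$.

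For the lower bound I would exhibit an explicit $A$-weighted zero-sum free sequence of length $r$. Take $S_0=e_1\cdots e_r$, where $e_1,\dots,e_r$ is the standard basis of $C_N^r$. For any nonempty $I\subseteq[1,r]$ and any weights $a_i\in A$, coordinate independence of the $e_i$ forces $\sum_{i\in I}a_ie_i=0$ to hold only if $N\mid a_i$ for every $i\in I$; since $A\subseteq[1,N-1]$ this is impossible, so $S_0$ is $A$-weighted zero-sum free and therefore $D_A(G)\ge |S_0|+1=r+1$.

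For the upper bound, which is the heart of the argument, I would show by a single ``scaling by $d^k$'' reduction that every sequence $S=g_1\cdots g_{r+1}$ of length $r+1$ has a nonempty $A$-weighted zero-sum subsequence. Write $g_i=(h_i,c_i)$ with $h_i\in H$ and $c_i\in C_N^r$. Since $\exp(H)\mid d^k$ we have $d^kg_i=(0,d^kc_i)$, and the elements $d^kc_1,\dots,d^kc_{r+1}$ all lie in $d^kC_N^r$, a subgroup of exponent $n$ and rank at most $r$. Hence the homomorphism $(\mathbb{Z}/n)^{r+1}\to d^kC_N^r$ sending $(b_i)$ to $\sum_i b_i(d^kc_i)$ cannot be injective, as its domain has $n^{r+1}$ elements and its image at most $n^r$. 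This yields a nonzero tuple $(b_1,\dots,b_{r+1})$ with $b_i\in[0,n-1]$ and $\sum_i b_i(d^kc_i)=0$; I then set $I=\{i:b_i\ne 0\}\ne\emptyset$ and $a_i=d^kb_i$ for $i\in I$.

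Two verifications close the proof, and the second is where the hypotheses do their work. First, $\sum_{i\in I}a_ig_i=\sum_{i\in I}b_i(d^kg_i)=\bigl(0,\sum_i b_i d^kc_i\bigr)=0$, so $\prod_{i\in I}g_i$ is $A$-weighted zero-sum provided each $a_i$ actually lies in $A$. Second, I must check $a_i=d^kb_i\in A$: from $1\le b_i\le n-1$ we get $d^k\le a_i\le d^kn-d^k\le N-1$, so $a_i\in[1,N-1]$; and if $a_i$ were excluded, say $d^kb_i=d^jn$ with $0\le j\le k-1$, then cancelling $d^j$ gives $d^{k-j}b_i=n$ with $k-j\ge 1$, forcing $d\mid n$ and contradicting $\gcd(d,n)\le d-1$. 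Thus $a_i\in A$, giving $D_A(G)\le r+1$ and, with the lower bound, $D_A(G)=r+1$. The only real obstacle is precisely this avoidance of the forbidden weights $\{d^jn\}$: the scaling trick is engineered so that every weight used is a multiple of $d^k$, and the coprimality $d\nmid n$ is exactly what prevents such multiples from colliding with the excluded values, while $\exp(H)\mid d^k$ plays the complementary role of making the single factor $d^k$ annihilate the entire $H$-component at once.
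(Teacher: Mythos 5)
Your proof is correct and follows essentially the same route as the paper: the same canonical basis sequence $e_1\cdots e_r$ for the lower bound, and the same scaling-by-$d^k$ reduction for the upper bound, with $\exp(H)\mid d^k$ annihilating the $H$-component and $d\nmid n$ (equivalent to $\gcd(d,n)\le d-1$) keeping the weights $d^kb_i$ out of the forbidden set $\{d^jn: 0\le j\le k-1\}$. The only difference is that where the paper invokes Lemma \ref{L1} to produce the fully weighted relation $\sum_j a_j\phi(g_{i_j})=0$ in $C_n^r$, you reprove that special case inline via the pigeonhole count $n^{r+1}>n^r$.
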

\begin{proof}
Since the canonical sequence $\prod_{i=1}^r e_i\in \mathcal{F}(C_{d^kn}^r)$ does not have zero-sum subsequence with respect to weights in $A$, $D_A(G)\geq r+1.$ Let $S=(h_i,g_i)_{i=1}^{r+1}\in\mathcal{F}(G)$. Consider a canonical homomorphism $\phi:G\rightarrow C_n^r$. Let $A'=\{1,2,\ldots,n-1\}.$ Since $D_{A'}(C_n^r)=r+1,$ by Lemma \ref{L1}, we get a non-empty subsequence $T=(g_{i_k})_{k=1}^t$ of $S$ such that $\sum_{j=1}^ta_j\phi(g_{i_j})=0\in C_n^r$, where $a_j\in A',\forall j$. Hence, using the fact that $\exp(H)\mid d^k$ we have, $\sum_{j=1}^td^ka_j(h_{i_j},g_{i_j})=0\in G$ (Note that $\phi(g)\equiv g\pmod n$, which as a result gives, $d^k\phi(g)\equiv d^k\cdot g\pmod {d^k\cdot n}$). Since $\gcd(d,n)\leq d-1$, it follows that $d^ka_j\in A,\forall j$, which proves the theorem.
\end{proof}

The hypothesis $d^kn\ge6$ in Theorem \ref{t2} is necessary, as on the contrary we have the following proposition.  

\begin{proposition}\label{p5}
If $G=C_2^s\oplus C_4^r$, then $D_A(G)=2r+s+1$ for $A=\{1,3\}=\{1,-1\}$.
\end{proposition}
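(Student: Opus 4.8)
The plan is to prove the two matching bounds $D_A(G)\ge 2r+s+1$ and $D_A(G)\le 2r+s+1$. The guiding observation is that in $C_4$ one has $3\equiv-1$, so $A=\{1,3\}$ acts exactly as $\{\pm1\}$ and ``$A$-weighted zero-sum'' means ``$\{\pm1\}$-weighted zero-sum''. Moreover $|G|=2^{s}\cdot4^{r}=2^{\,2r+s}$, so $\log_2|G|+1=2r+s+1$ is precisely the target value; this coincidence is what makes Theorem \ref{adhi} the natural tool for the upper bound.

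For the lower bound I would exhibit an explicit $\{\pm1\}$-weighted zero-sum free sequence of length $2r+s$. Write $G=\langle e_1\rangle\oplus\cdots\oplus\langle e_r\rangle\oplus\langle f_1\rangle\oplus\cdots\oplus\langle f_s\rangle$ with $\ord(e_i)=4$ and $\ord(f_j)=2$, and set
\[
S=\Big(\prod_{i=1}^{r}e_i\Big)\Big(\prod_{i=1}^{r}2e_i\Big)\Big(\prod_{j=1}^{s}f_j\Big),
\]
a sequence of length $2r+s$. Suppose a nonempty subsequence carries weights $\varepsilon_t\in\{\pm1\}$ with weighted sum $0$, and inspect each coordinate. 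In the $\langle e_i\rangle\cong C_4$ coordinate, $e_i$ contributes $\pm1$ when chosen and $2e_i$ contributes $2$ when chosen; if $e_i$ were chosen the coordinate would be odd and hence nonzero mod $4$, so $e_i$ is omitted, and then the coordinate equals $2$ or $0$ according as $2e_i$ is or is not chosen, forcing $2e_i$ out as well. In each $\langle f_j\rangle\cong C_2$ coordinate the value vanishes only if $f_j$ is omitted. Thus the chosen index set is empty, a contradiction; hence $S$ is $\{\pm1\}$-weighted zero-sum free and $D_A(G)\ge 2r+s+1$.

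For the upper bound, assume first $r\ge1$, so that $G$ is not an elementary abelian $2$-group. Any sequence $S$ with $|S|=2r+s+1=\log_2|G|+1$ satisfies the hypothesis of Theorem \ref{adhi}, which yields a proper, nontrivial $\{\pm1\}$-weighted, hence $A$-weighted, zero-sum subsequence; in particular every such $S$ has a nonempty $A$-weighted zero-sum subsequence, so $D_A(G)\le 2r+s+1$, and equality follows. The degenerate case $r=0$ is immediate: then $G=C_2^{s}$ is elementary abelian and $\{1,-1\}=\{1\}$ on it, so $D_A(G)=D(C_2^{s})=s+1=2r+s+1$. The proof is short because Theorem \ref{adhi} carries the upper bound; the only points needing care are the arithmetic identity $\log_2|G|+1=2r+s+1$, the verification that the order-$2$ elements $2e_i$ and $f_j$ cannot take part in a cancellation (an odd contribution plus an even one stays odd in $C_4$), and the check that the non-elementary-$2$-group hypothesis of Theorem \ref{adhi} is exactly the condition $r\ge1$.
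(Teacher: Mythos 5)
Your proof is correct and follows essentially the same route as the paper: the upper bound via Theorem \ref{adhi} using $\log_2|G|+1=2r+s+1$, and the lower bound via the same extremal sequence $\prod e_i \prod 2e_i \prod f_j$ of length $2r+s$. You merely spell out details the paper leaves implicit (the coordinate-wise verification of zero-sum freeness and the degenerate case $r=0$, where Theorem \ref{adhi} does not apply).
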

\begin{proof}
This upper bound is a immediate consequence of Theorem \ref{adhi}. For the lower bound we observe that the sequence $S=\prod_{i=1}^{s+r}e_i\prod_{i=s+1}^{s+r}2e_i$ does not have $\{1,-1\}$-weighted zero-sum subsequece, where $\{e_1,\dots,e_{s+r}\}$ is the canonical base of $G.$ 
\end{proof}

\begin{remark}
Note that, if $B\subset A$ then $D_A(G)\le D_B(G)$. 
\end{remark}

As a consequence of Theorem \ref{t2}, we get the following corollary.
\begin{corollary}\label{c1}
Let $G = H\oplus C_{d^kn}^r$, where $\exp(H)\mid d^k,$ $\gcd(d,n)\leq d-1$ and $d^kn\geq 6,$ where $k$ is a positive integer. Then $D_A(G)=r+1$, for all $A$ containing $B=\{1,2,\ldots,d^kn-1\}\setminus\{d^kn/d^i:i\in[1,k]\}$.
\end{corollary}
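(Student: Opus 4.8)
The plan is to obtain the two inequalities $D_A(G)\le r+1$ and $D_A(G)\ge r+1$ separately, the first by a monotonicity argument reducing to Theorem \ref{t2}, and the second by reusing the canonical-sequence construction. For the upper bound I would simply invoke the remark preceding the statement: since $B\subseteq A$, weighting by elements of $B$ is a special case of weighting by elements of $A$, so every sequence admitting a $B$-weighted zero-sum subsequence also admits an $A$-weighted one; hence $D_A(G)\le D_B(G)$. By Theorem \ref{t2} we have $D_B(G)=r+1$, and therefore $D_A(G)\le r+1$.

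For the lower bound I would exhibit a single $A$-weighted zero-sum free sequence of length $r$. The natural candidate is the canonical sequence $S=\prod_{i=1}^r(0,e_i)$, where $\{e_1,\dots,e_r\}$ is the standard basis of $C_{d^kn}^r$ (each $e_i$ of order $d^kn$) and the $H$-component is $0$. Given a nonempty $I\subseteq[1,r]$ and weights $a_i\in A$, the $A$-weighted sum $\sum_{i\in I}a_i(0,e_i)$ has $i$-th coordinate congruent to $a_i$ modulo $d^kn=\exp(G)$. Because the standing hypothesis forces $A\subseteq\mathbb{Z}\setminus\{m\,d^kn:m\in\mathbb{Z}\}$, each $a_i$ is nonzero modulo $d^kn$, so the sum cannot vanish. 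Thus no nonempty subsequence of $S$ is $A$-weighted zero-sum, which gives $D_A(G)\ge r+1$. Combining the two bounds yields the claimed equality.

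I do not expect a genuine obstacle here, since the corollary follows formally from Theorem \ref{t2} once the monotonicity of $A\mapsto D_A(G)$ is noted. The only point worth checking is that the lower-bound construction is insensitive to the precise shape of $A$: it uses nothing about $A$ beyond the requirement that $A$ avoid multiples of the exponent, which holds for every admissible weight set, and the $\mathbb{Z}$-linear independence of the basis of $C_{d^kn}^r$ takes care of the rest. In particular the hypotheses $\exp(H)\mid d^k$, $\gcd(d,n)\le d-1$ and $d^kn\ge 6$ are needed only through their role in Theorem \ref{t2}.
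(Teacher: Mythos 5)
Your proposal is correct and follows exactly the route the paper intends: the upper bound comes from the monotonicity remark ($B\subseteq A$ implies $D_A(G)\le D_B(G)$) combined with Theorem \ref{t2}, and the lower bound reuses the canonical sequence $\prod_{i=1}^r e_i$, which is $A$-weighted zero-sum free for any admissible $A$ avoiding multiples of the exponent. The paper leaves this proof implicit, and your write-up supplies precisely the missing details.
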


One can easily see that Theorem \ref{t2} does not hold true if $\exp(H)\nmid d^k$, in fact, the next proposition provides infinitely many examples such that $D_{A\setminus\{n\}}(G)\neq D_{A}(G)$.

\begin{proposition}\label{T4}
Let\ $G=L\oplus C_n\oplus C_{2n}^r$, where $n>1$ an odd number and $L=C_{n_1}\oplus\dots\oplus C_{n_t}$ with $n_1|n_2|\cdots|n_t|n$. Then, $D_{A\setminus\{n\}}(G)\ge r+2$.
\end{proposition}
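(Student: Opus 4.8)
The plan is to produce, for each such $G$, an explicit $(A\setminus\{n\})$-weighted zero-sum free sequence of length $r+1$. Since $D_{A\setminus\{n\}}(G)$ is by definition the least length forcing a nonempty $(A\setminus\{n\})$-weighted zero-sum subsequence, a single zero-sum free sequence of length $r+1$ immediately yields $D_{A\setminus\{n\}}(G)\ge r+2$. For context, writing $G=(L\oplus C_n)\oplus C_{2n}^r$ with $\exp(L\oplus C_n)=n<2n=\exp(G)$, Lemma \ref{L1} applied to the full set $A=[1,2n-1]$ gives $D_A(G)=r+1$, so the construction below also exhibits the inequality $D_{A\setminus\{n\}}(G)\neq D_A(G)$ advertised before the statement.

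Here $A\setminus\{n\}=[1,2n-1]\setminus\{n\}$. I would take $f$ a generator of the $C_n$ summand, so $\ord(f)=n$, and $e_1,\ldots,e_r$ generators of the $r$ copies of $C_{2n}$, so $\ord(e_i)=2n$, and set $S=f\prod_{i=1}^r e_i$, a sequence of length $r+1$. The verification rests on the independence of the summands. Suppose a nonempty $T\mid S$ satisfies $\sigma^{\mathbf a}(T)=0$ with every weight lying in $[1,2n-1]\setminus\{n\}$. If some $e_i$ divides $T$, its contribution $a_ie_i$ lives in the $i$-th $C_{2n}$ summand, to which no other term of $S$ contributes; since $a_i\in[1,2n-1]$ forces $2n\nmid a_i$, we get $a_ie_i\neq 0$, so the $i$-th component of $\sigma^{\mathbf a}(T)$ is nonzero, a contradiction. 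Hence $T$ contains no $e_i$, which forces $T=f$.

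In that case $\sigma^{\mathbf a}(T)=af$ with $af=0$ in $C_n$ for some $a\in[1,2n-1]\setminus\{n\}$, which requires $n\mid a$. But the only multiple of $n$ in $[1,2n-1]$ is $n$ itself, and $n$ has been deleted from the weight set. This final contradiction shows that $S$ is $(A\setminus\{n\})$-weighted zero-sum free, and therefore $D_{A\setminus\{n\}}(G)\ge r+2$.

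There is no genuine obstacle beyond tracking the weight set correctly; the crux is the observation that the order-$n$ element $f$ can be annihilated only by the weight $n$, since $af=0$ forces $n\mid a$, and this is exactly the weight removed from $A$, whereas the order-$2n$ generators $e_i$ cannot be annihilated by any admissible weight at all. This is precisely the failure mode created by $\exp(L\oplus C_n)=n\nmid d^k=2$, i.e.\ by the violation of the hypothesis $\exp(H)\mid d^k$ of Theorem \ref{t2}.
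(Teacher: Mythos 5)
Your proof is correct, but your witness sequence differs from the paper's. The paper exhibits the length-$(r+1)$ zero-sum free sequence $(e_{t+1}+e_{t+2})(e_{t+1}+ne_{t+2})\prod_{i=3}^{r+1}e_{t+i}$, whose first two terms mix the $C_n$ and $C_{2n}$ coordinates; verifying that no weighted subsum of those two terms vanishes requires a short congruence computation modulo $n$ and $2n$ (this is where the paper invokes the oddness of $n$, e.g.\ to see that $e_{t+1}+ne_{t+2}$ still has order $2n$). You instead take the ``diagonal-free'' sequence $f\prod_{i=1}^{r}e_i$ with $f$ a generator of the $C_n$ summand, so that the verification reduces entirely to the independence of the direct summands: each $e_i$ can only be annihilated by a multiple of $2n$, which is outside $[1,2n-1]$, and $f$ can only be annihilated by a multiple of $n$, the unique such weight in range being exactly the deleted weight $n$. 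Your argument is more elementary, isolates cleanly why deleting $n$ from the weight set is the decisive point, and in fact never uses the hypothesis that $n$ is odd, so it proves the bound slightly more generally than the paper's argument as written; what it gives up is only that your witness contains an element of order $n$ rather than consisting entirely of elements of maximal order $2n$, which is immaterial for the stated inequality. Both constructions yield a zero-sum free sequence of length $r+1$ and hence $D_{A\setminus\{n\}}(G)\ge r+2$.
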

\begin{proof}
Since $n$ is an odd number one can easily prove that $(e_{t+1}+e_{t+2})(e_{t+1}+ne_{t+2})$ is zero-sum free with respect to weights in $A\setminus\{n\}$ and which in turn gives rise to a $A\setminus\{n\}$-weighted zero-sum free sequence $(e_{t+1}+e_{t+2})(e_{t+1}+ne_{t+2})\prod_{i=3}^{r+1}e_{t+i}$.
\end{proof}

%


Theorem \ref{0-c} below provides one more case for which Conjecture 4.3 of \cite{losm} holds.

\begin{theorem}\label{0-c}
Let $G=H\oplus C_{d^kn}^r$, where $\exp(H)\mid d^k,$  $  \gcd(d,n)\leq d-1$ and $d^kn\geq 6,$ where $k$ is a positive integer. Then $G$ is $0$-complete with respect to $A=\{1,2,\ldots,d^kn-1\}\setminus\{d^kn/d^j:j\in[1,k]\}.$
\end{theorem}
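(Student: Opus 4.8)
The plan is to reduce the statement to the fully weighted $0$-completeness of $C_n^r$ (Theorem \ref{T1}) via the canonical projection, in exact parallel with the proof of Theorem \ref{t2}. By Theorem \ref{t2} we have $D_A(G)=r+1$, so it suffices to prove $N_{A,0}(S)\ge 2^{|S|-r}$ for every $S=(h_i,g_i)_{i=1}^m\in\mathcal{F}(G)$, writing $m=|S|$. I would consider the canonical homomorphism $\phi\colon G\to C_n^r$ that reduces each $C_{d^kn}$-coordinate modulo $n$ and sends $H$ to $0$, and set $\bar S=\phi(g_1)\cdots\phi(g_m)\in\mathcal{F}(C_n^r)$ together with $A'=[1,n-1]$.

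First I would invoke the machinery already available on the base group: since $\exp(C_n^r)=n$, Theorem \ref{T1} says $C_n^r$ is $0$-complete with respect to $A'$, while Lemma \ref{L1} gives $D_{A'}(C_n^r)=r+1$; combining these yields $N_{A',0}(\bar S)\ge 2^{m-D_{A'}(C_n^r)+1}=2^{m-r}$. The remaining task is a weight-lifting step: I would show that every index set $I\subseteq[1,m]$ counted by $N_{A',0}(\bar S)$ is also counted by $N_{A,0}(S)$. If $a_i\in A'$ satisfy $\sum_{i\in I}a_i\phi(g_i)=0$ in $C_n^r$, then rescaling the weights by $d^k$ produces $\sum_{i\in I}d^k a_i (h_i,g_i)=0$ in $G$, exactly as in the proof of Theorem \ref{t2}: the hypothesis $\exp(H)\mid d^k$ annihilates the $H$-component, and $d^k\phi(g)\equiv d^k g\pmod{d^kn}$ transports the relation in $C_n^r$ up to $C_{d^kn}^r$. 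Consequently the collection of $A'$-weighted zero-sum index sets of $\bar S$ is contained in the collection of $A$-weighted zero-sum index sets of $S$, whence $N_{A,0}(S)\ge N_{A',0}(\bar S)\ge 2^{m-r}=2^{|S|-D_A(G)+1}$; since $S$ is arbitrary, $G$ is $0$-complete with respect to $A$.

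The one point that requires genuine care — and the only place where the arithmetic hypothesis enters — is verifying $d^k a_i\in A$, so that the lifted weights are admissible. The containment $d^k a_i\in[d^k,\,d^k(n-1)]\subseteq[1,d^kn-1]$ is immediate, and the only danger is that $d^k a_i$ could hit one of the excluded values $d^{k-j}n$ with $j\in[1,k]$. An equality $d^k a_i=d^{k-j}n$ would force $a_i=n/d^j$, which is impossible because $\gcd(d,n)\le d-1$ means $d\nmid n$, hence $d^j\nmid n$ for every $j\ge 1$. This is precisely the role of the hypothesis $\gcd(d,n)\le d-1$, while $d^kn\ge 6$ is needed only indirectly, through its use in Theorem \ref{t2} to secure $D_A(G)=r+1$. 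I do not anticipate any serious obstacle beyond this membership check, since all of the counting content is supplied by Theorem \ref{T1} and no Olson-type doubling argument has to be reproduced from scratch.
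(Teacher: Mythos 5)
Your proof is correct, but it takes a genuinely different route from the paper's. The paper reruns the Olson-type doubling argument directly in $G$: it factors $S=TW$ with $T$ a maximal $A$-weighted zero-sum free subsequence (so $|T|\le r$), shows that for each $g\mid W$ the multiple $d^k a_g g$ is an $A$-weighted subsum of $T$, and then attaches to each of the $2^{|W|}$ subsets $U$ of $W$ a subsequence $V_U$ of $T$ making $UV_U$ an $A$-weighted zero-sum subsequence; these are pairwise distinct because their $W$-parts differ. You instead push $S$ down to $C_n^r$, invoke Theorem \ref{T1} (the fully weighted $0$-completeness from \cite{losm}) together with $D_{[1,n-1]}(C_n^r)=r+1$ from Lemma \ref{L1} to get $2^{|S|-r}$ zero-sum index sets downstairs, and lift each index set by rescaling its weights by $d^k$ --- exactly the transfer used in the paper's proof of Theorem \ref{t2}. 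Both arguments ultimately rest on the same arithmetic fact, which you isolate correctly: since $\gcd(d,n)\le d-1$ forces $d\nmid n$, the residue of $d^k b$ modulo $d^kn$ lies in $A\cup\{0\}$ for every integer $b$. Your version is shorter and uniform with Theorem \ref{t2}, at the price of using Theorem \ref{T1} as a black box; the paper's version is self-contained and, more to the point, the explicit family $\{UV_U\}$ it constructs is quoted again in the proof of Theorem \ref{thm11}, where one needs to know the shape of the $2^{|S|-r}$ zero-sum subsequences in order to exhibit an extra one in the non-extremal case. (Both proofs silently assume $n>1$, as in the abstract, so that $[1,n-1]\neq\emptyset$ and $C_n^r$ is a nontrivial group of exponent $n$.)
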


\begin{proof}
Let $S\in\mathcal{F}(G)$ be a sequence. According to Corollary \ref{c1}, we can write $D_{A}(G)=r+1$. If $|S|\leq r$, then $N_{A,0}(S)\geq1\geq 2^{|S|-r}$.
If $|S|=r+1$, then there is an $A$-weighted zero-sum nonempty subsequence $T$ of $S$. Thus, $N_{A,0}(S)\geq 2=2^{|S|-r}$. 

Suppose now $r+1<|S|$. Let $S=TW\in\mathcal{F}(G)$ be a sequence such that $T$ is a maximal $A$-weighted zero-sum free with $|T|\le r$ or $T=\lambda.$ 

Then, for each element $g|W$, we have two possibilities:

\textbf{a)} If $o(g)\in A$, then $g$ is an $A$-weighted zero-sum subsequence.
 
\textbf{b)} If $o(g)\not\in A$  for $j=1,\dots k$, then $Tg$ has an $A$-weighted zero-sum subsequence with $g$ being one of its elements.

In both possibilities, there is $V|Tg$, such that $V$ is an $A$-weighted zero-sum subsequence
whose coefficient of $g$ is $a_{g}\in A$. Then, $a_{g}g$ is an $A$-weighted sum
of some subsequence of $T$:
\[a_{g}g={\sum}_{i\in I_g}a_i g_i; I_g\subset I_T\mbox{ and } a_i\in A.\] 
Thus, for every nonempty subsequence $U$ of $W,$ we have
\[d^k{\sum}_{g|U}a_{g}g={\sum}_{g|U} d^k{\sum}_{i\in I_g}a_i g_i={\sum}_{i\in I_{V'}}d^kb_i g_i; I_{V'}\subset I_T, \] with $d^kb_i\pmod{d^kn}\in A$ or $d^kb_i\equiv0\pmod{d^kn}$,
i. e., the $A$-weighted sum $d^k{\sum}_{g|U}a_{g}g$ is an $A$-weighted sum of some subsequence $V'=V_{U}$ of $T$. Therefore, $UV_{U}$ is an $A$-weighted zero-sum subsequence of $S$. Notice that if $V_{U}=\lambda$, then $U$ is an $A$-weighted zero-sum subsequence. Therefore, if we include the empty subsequence, we obtain
a minimum of $2^{|W|}=2^{|S|-|T|}$ distinct $A$-weighted zero-sum subsequences
of $S$. This proves that $N_{A,0}(S)\ge 2^{|S|-r}$. 
\end{proof}

\section{Characterization of extremal $0$-complete sequences}\hspace{0.6cm}\label{CE0cs} 

We shall start by mentioning one of the main results obtained in the case which $\exp(G)$ is an odd positive integer (see Theorem 4.2 of \cite{losm}).

\begin{theorem}
Let $G$ be a finite abelian group with $exp(G)=n$ an odd number. If $S\in EC_{A,0}\left(\mathcal{F}\left(G\right)\right)$, with $A=[1,n-1]$, $0\nmid S$ and $o(g)=n$ for all $g|S$, then $r\leq\left|S\right|\leq2r$ and there is $T=\prod_{i=1}^{r}g_i$ an extremal $A$-weighted zero-sum free, such that 
\begin{equation}
S=T\prod_{j=1}^{k}h_{j},
\end{equation}
 where $k\in\left[1,r\right]$, $b_{j}h_{j}=\sum_{i\in I_{j}}a_{i}g_{i}$
with $a_{i},b_{j}\in A$, $I_{j}\subset\left[1,r\right]$ and $I_{j}$'s
are pairwise disjoint ($I_{j}=\emptyset$ for all $j$ implies that
$S=T$).
\end{theorem}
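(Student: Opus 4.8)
The plan is to combine the lower-bound construction with the rigidity forced by equality. Fix a maximal $A$-weighted zero-sum free subsequence $T$ of $S$ and write $S=TW$ with $T,W$ disjoint. By Lemma~\ref{L1}, $|T|\le D_A(G)-1=r$. The construction establishing Theorem~\ref{T1} (the same mechanism as in the proof of Theorem~\ref{0-c}) attaches to each subsequence $U\mid W$ a subsequence $V_U\mid T$ with $UV_U$ an $A$-weighted zero-sum; since distinct $U$ have distinct $W$-parts, the map $U\mapsto UV_U$ is injective, so $N_{A,0}(S)\ge 2^{|W|}=2^{|S|-|T|}$. Extremality $N_{A,0}(S)=2^{|S|-r}$ then forces $|T|=r$, so $T=\prod_{i=1}^r g_i$ is extremal $A$-weighted zero-sum free and $|S|\ge|T|=r$, settling the lower bound on $|S|$ and the existence of $T$.

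The engine of the argument is the following reformulation of extremality. Every $A$-weighted zero-sum subsequence $R\mid S$ factors uniquely as $R=UV$ with $U=\gcd(R,W)$ and $V=\gcd(R,T)$, so $N_{A,0}(S)=\sum_{U\mid W}n_U$, where $n_U$ is the number of $V\mid T$ with $UV$ an $A$-weighted zero-sum. Each $n_U\ge1$ by the first paragraph, the sum ranges over exactly $2^{|W|}$ subsequences $U$, and $N_{A,0}(S)=2^{|W|}$; hence $n_U=1$ for every $U\mid W$, i.e.\ each $U$ has a \emph{unique} completion $V\mid T$ to an $A$-weighted zero-sum. Applying this to $U=h$ for each $h\mid W$, and using $o(h)=n\notin A$ together with $0\nmid S$ so that no single $h$ is a zero-sum, maximality of $T$ produces an $A$-weighted zero-sum subsequence of $Th$ containing $h$; transposing the $T$-part (and using $-a\equiv n-a\in A$) yields a relation $b_h h=\sum_{i\in I_h}a_i g_i$ with coefficients in $A$ and support $I_h:=\supp V_h\subseteq[1,r]$, now canonically determined by uniqueness. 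Moreover $I_h\ne\emptyset$, since $I_h=\emptyset$ would give $b_h h=0$ with $b_h\in[1,n-1]$, contradicting $o(h)=n$.

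Writing $W=\prod_{j=1}^k h_j$ and $I_j:=I_{h_j}$, it remains to prove that $I_1,\dots,I_k$ are pairwise disjoint; granting this, the $I_j$ are disjoint nonempty subsets of $[1,r]$, so $k\le\sum_j|I_j|\le r$, whence $|S|=r+k\le2r$ and $k\in[1,r]$ (the case $W=\lambda$, $k=0$, being exactly $S=T$). I would obtain disjointness by contradiction against the uniqueness $n_U=1$: assuming $i_0\in I_{j_1}\cap I_{j_2}$ for some $j_1\ne j_2$, I would recombine $b_{j_1}h_{j_1}=\sum_{i\in I_{j_1}}a_ig_i$ and $b_{j_2}h_{j_2}=\sum_{i\in I_{j_2}}a'_ig_i$ into \emph{two} distinct subsequences $V\mid T$ completing $U=h_{j_1}h_{j_2}$ to an $A$-weighted zero-sum, contradicting $n_{h_{j_1}h_{j_2}}=1$. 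I expect this recombination to be the main obstacle: the two representations of $b_{j_1}h_{j_1}+b_{j_2}h_{j_2}$ over $T$—one supported on $I_{j_1}\cup I_{j_2}$ and one on a strictly smaller support obtained by cancelling the overlap—must both be realized with all weights inside $A=[1,n-1]$ and be genuinely distinct subsequences, and controlling the combined coefficients on the overlapping indices after reduction modulo $n$ is precisely where the hypotheses $n$ odd and $o(g)=n$ for all $g\mid S$ are consumed (oddness guaranteeing, for instance, that negating or doubling a weight stays in $A$ and never collapses a support unexpectedly). Once disjointness is secured, the bounds $r\le|S|\le2r$, the range $k\in[1,r]$, and the degenerate identity $S=T$ follow as bookkeeping.
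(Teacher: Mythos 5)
Your first two paragraphs are correct and follow the same route as the paper: the decomposition $S=TW$ with $T$ extremal $A$-weighted zero-sum free (forced by $D_A(G)=r+1$ and extremality), the partition $N_{A,0}(S)=\sum_{U\mid W}n_U$ with every $n_U\ge1$, the conclusion $n_U=1$ for all $U\mid W$, the nonemptiness of each $I_h$ from $o(h)=n$, and the bookkeeping $k\le r$, $|S|\le 2r$ once disjointness is known. This is exactly the mechanism behind Proposition \ref{formadasequencia} and the counting in the proof of Theorem \ref{0-c}.

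The gap is in the disjointness step, precisely where you flagged the "main obstacle," and your proposed recombination does not by itself close it. Given $l\in I_{j_1}\cap I_{j_2}$, adding the two relations yields a completion of $U=h_{j_1}h_{j_2}$ supported on some $I_x\subseteq I_{j_1}\cup I_{j_2}$ (overlap indices drop out exactly when $a_i+a_i'\equiv 0\pmod n$), and subtracting them (legitimate since $n-a\in A$) yields another supported on some $I_y\subseteq I_{j_1}\cup I_{j_2}$. Oddness of $n$ only rules out $I_x=I_y=\emptyset$ (that would give $2b_{j_2}h_{j_2}=0$, hence $n\mid b_{j_2}$); it does not rule out $I_x=I_y\neq\emptyset$. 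Nothing prevents $a_i+a_i'\not\equiv 0$ and $a_i-a_i'\not\equiv 0$ for every $i$ in the overlap, in which case both recombinations determine the \emph{same} subsequence $V\mid T$, you obtain only one completion of $h_{j_1}h_{j_2}$, and no contradiction with $n_{h_{j_1}h_{j_2}}=1$ arises. The paper needs a separate idea for exactly this case: pick $l$ in the overlap, exchange $g_l$ for $h_{j_2}$ to form $T'=\bigl(\prod_{i=1}^{r+1}g_i\bigr)g_l^{-1}$ with $g_{r+1}=h_{j_2}$, and use the Davenport bound again (Lemma \ref{L1}, via $|T'h_{j_1}|>r$) to extract a zero-sum relation whose support avoids $l$; since $l$ lies in $I_{j_1}\cap I_{j_2}$, the resulting completion of $h_{j_2}$, of $h_{j_1}$, or of $h_{j_1}h_{j_2}$ is distinct from the ones already counted, violating uniqueness. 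Without such an exchange argument your proof does not go through.
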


In this section, our aim is to prove a variant of the result above in case $G = H\oplus C_{d^kn}^r$ be a finite abelian group, where $k$ is a positive integer, $\exp(H)\mid d^k,$ $n$ is an odd number, $\gcd(d,n)\leq d-1,d^kn\geq 6$ and $A=\{1,2,\ldots,d^kn-1\}\setminus\{d^kn/d^j:j\in[1,k]\},$ which will be established in Theorem \ref{thm11}. 


First, we consider a modification of the Proposition 4.1 (see \cite{losm}), which will be the main tool to prove the Theorem \ref{thm11}.

As $N_{A,0}\left(S\right)=2N_{A,0}\left(S0^{-1}\right)$ and $N_{A,0}\left(S\right)=2N_{A,0}\left(Sg^{-1}\right)$,
if $o(g)\in A,$ it suffices to consider sequences $S$, such that $0\nmid S$ and $o(g)\not\in A$ for all $g|S$. 

\begin{proposition}
\label{formadasequencia}
Let $G = H\oplus C_{d^kn}^r$ be a finite abelian group where $\exp(H)\mid d^k,$ \linebreak $\gcd(d,n)\leq d-1$ and $d^kn\geq 6,$ where $k$ is a positive integer. If $S\in EC_{A,0}\left(\mathcal{F}\left(G\setminus \{0\}\right)\right)$, with \linebreak $A=\{1,2,\ldots,d^kn-1\}\setminus\{d^kn/d^j:j\in[1,k]\}$ and $o(g)\not\in A$ for all $g|S$, then $r\leq\left|S\right|$ and there is $T=\prod_{i=1}^{r}g_i$ an extremal $A$-weighted zero-sum free such that 
\begin{equation}
S=T\prod_{j=1}^{\nu}h_{j},
\end{equation}
 where $\nu\in\left[1,r\right]$, $b_{j}h_{j}=\sum_{i\in I_{j}}a_ig_{i}$
with $a_i, b_{j}\in A$, $I_{j}\subset\left[1,r\right]$.

\end{proposition}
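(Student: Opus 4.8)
The plan is to analyze the equality case in the proof of Theorem \ref{0-c}. By Theorem \ref{t2} (equivalently Corollary \ref{c1}) we have $D_A(G)=r+1$, so $S\in EC_{A,0}(\mathcal{F}(G\setminus\{0\}))$ means precisely $N_{A,0}(S)=2^{|S|-r}$; since $0\nmid S$ and $o(g)\notin A$ for every $g\mid S$ are assumed, no preliminary reduction is needed. First I would decompose $S=TW$ exactly as in Theorem \ref{0-c}, choosing $T\mid S$ to be a \emph{maximal} $A$-weighted zero-sum free subsequence, so that $|T|\le D_A(G)-1=r$. The construction in that proof attaches to each $g\mid W$ a relation $a_g g=\sum_{i\in I_g}a_ig_i$ with $a_g,a_i\in A$ and $I_g\subseteq I_T$ (maximality of $T$ forces $Tg$ to contain an $A$-weighted zero-sum subsequence in which $g$ occurs), and then produces, for every subset $U\subseteq W$, an $A$-weighted zero-sum subsequence $UV_U$ with $V_U\mid T$. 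These $2^{|W|}$ subsequences are pairwise distinct because they have distinct $W$-parts, whence $N_{A,0}(S)\ge 2^{|W|}=2^{|S|-|T|}$.

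Comparing with extremality gives $2^{|S|-|T|}\le 2^{|S|-r}$, so $|T|\ge r$ and therefore $|T|=r$. Thus $T=\prod_{i=1}^{r}g_i$ is an extremal $A$-weighted zero-sum free subsequence, $W=\prod_{j=1}^{\nu}h_j$ with $\nu=|S|-r$, and writing $b_j:=a_{h_j}\in A$ and $I_j:=I_{h_j}\subseteq[1,r]$ we obtain exactly the asserted relations $b_jh_j=\sum_{i\in I_j}a_ig_i$. It then remains only to prove the bound $\nu\le r$.

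For $\nu\le r$ I would argue by contradiction, assuming $\nu\ge r+1$, and reuse the mechanism of Theorem \ref{t2}: applying the canonical homomorphism $\phi\colon G\to C_n^r$ to the length-$\nu$ subsequence $\phi(W)$ and invoking $D_{A'}(C_n^r)=r+1$ from Lemma \ref{L1} (with $A'=[1,n-1]$) yields a nonempty $A'$-weighted zero-sum among the $\phi(h_j)$; multiplying through by $d^k$ and using $\gcd(d,n)\le d-1$ to keep the weights inside $A$ lifts it to a nonempty $A$-weighted zero-sum subsequence $U_0\mid W$. The role of extremality is that the $2^{\nu}$ subsequences $UV_U$ are \emph{all} the $A$-weighted zero-sum subsequences of $S$, so each subset $U\subseteq W$ is the $W$-part of exactly one zero-sum subsequence of $S$; I would then combine the relation realizing $U_0$ with the constructed completion of a suitable $U$ to exhibit two distinct subsets of $T$ completing one and the same $W$-part to a zero-sum subsequence, contradicting this uniqueness and the count $N_{A,0}(S)=2^{\nu}$. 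This last step is where I expect the main obstacle to lie: converting the mere existence of a zero-sum contained in $W$ into a genuine over-count requires careful bookkeeping of how the partial $T$-parts $I_g$ add up and of the freedom in the weights realizing $U_0$, which is precisely what makes the characterization of extremal sequences more delicate than the lower bound in Theorem \ref{0-c}. Finally, the degenerate case $|S|=r$ (that is, $\nu=0$ and $S=T$) is trivial and may be excluded.
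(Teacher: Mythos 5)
The paper gives no real proof of this proposition---it only remarks that it ``is a mere consequence of $D_A(G)=r+1$''---so your reconstruction is necessarily more explicit than the original. Your main line of argument is surely the intended one: take a maximal $A$-weighted zero-sum free $T\mid S$ (so $|T|\le D_A(G)-1=r$ by Theorem \ref{t2}), run the counting from the proof of Theorem \ref{0-c} to get $N_{A,0}(S)\ge 2^{|S|-|T|}$, and let extremality force $|T|=r$; this yields $r\le|S|$, the extremality of $T$, and the relations $b_jh_j=\sum_{i\in I_j}a_ig_i$ all at once. That part is correct, modulo a caveat you inherit from the paper itself: the coefficients $a_i$ arise as negatives of elements of $A$, and this $A$ is in general not closed under negation modulo $d^kn$ (e.g.\ $d=3$, $k=1$, $n=2$ gives $A=\{1,3,4,5\}$ with $-4\equiv 2\notin A$), so strictly one must pass to the weights $d^k a_i\bmod d^kn$, exactly as the proof of Theorem \ref{0-c} does, for the relations to be realized with weights in $A$.

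Where you and the paper genuinely diverge is the clause $\nu\le r$. Your proposed route---push $W$ down to $C_n^r$ via $\phi$, invoke $D_{A'}(C_n^r)=r+1$ from Lemma \ref{L1} to find a zero-sum $U_0\mid W$, and then over-count---has precisely the gap you flag: if the completion $V_{U_0}$ is empty, then $U_0$ and $U_0V_{U_0}$ have the same index set and no new zero-sum subsequence is exhibited, so no contradiction with $N_{A,0}(S)=2^{\nu}$ results. The paper sidesteps this entirely: it does not prove $\nu\le r$ here at all, and when it invokes this proposition inside the proof of Theorem \ref{thm11} it explicitly takes only ``$\nu\in\mathbb{N}_0$'' from it; the upper bound is obtained only at the end of Theorem \ref{thm11}, by pigeonhole, from the pairwise disjointness of the nonempty sets $I_j\subset[1,r]$---which is the main content of that theorem, not of this proposition. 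So the ``$\nu\in[1,r]$'' in the statement is effectively a forward reference; your proof of everything else stands, but the bound $\nu\le r$ should either be dropped from what you claim to prove here or be deferred, as the paper does, to the disjointness argument of Theorem \ref{thm11}.
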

\noindent The proposition above is a mere consequence of $D_A(G) = r+1$.

Let us see below an example where we show an extreme sequence with respect to $N_{A,0}(S)$ for a group of order $72.$

\begin{example}
Let $S=e_2e_3(2e_2)(2e_3)$ be a sequence over $G=C_2\oplus C_6^2,$ where $\{e_1,e_2,e_3\}$ is the canonical basis of $G.$ Note that, $D_A(G)=3$ where $A=\{1,2,4,5\}$, $|S|=4=2(D_A(G)-1),$ and $N_{A,0}(S)=2^{|S|-D_A(G)+1}=2^2=4$. In this case, $T=e_2e_3$ is an extremal $A$-weighted zero-sum free.
\end{example}

The example above motivates us to establish the theorem below. 

\begin{theorem}\label{thm11}
Let $G = H\oplus C_{d^kn}^r$ be a finite abelian group where $n$ is an odd number,\linebreak $\exp(H)\mid d^k,$ $\gcd(d,n)\leq d-1$ and $d^kn\geq 6,$ where $k\in\mathbb{N}.$ If $S\in EC_{A,0}\left(\mathcal{F}\left(G\setminus \{0\}\right)\right)$, with \linebreak $A=\{1,2,\ldots,d^kn-1\}\setminus\{d^kn/d^j:j\in[1,k]\}$ and $o(g)\not\in A$ for all $g|S$, then $r\leq\left|S\right|\leq2r$ and there is $T=\prod_{i=1}^{r}g_i$ an extremal $A$-weighted zero-sum free such that 
\begin{equation}
S=T\prod_{j=1}^{\nu}h_{j},
\end{equation}
 where $\nu\in\left[1,r\right]$, $b_{j}h_{j}=\sum_{i\in I_{j}}a_ig_{i}$
with $a_i, b_{j}\in A$, $I_{j}\subset\left[1,r\right]$ and $I_{j}$'s
are pairwise disjoint ($I_{j}=\emptyset$ for all $j$ implies that
$S=T$).
\end{theorem}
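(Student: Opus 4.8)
The plan is to build on Proposition \ref{formadasequencia}, which already hands us the decomposition $S = T\prod_{j=1}^{\nu}h_j$ with $T=\prod_{i=1}^r g_i$ an extremal $A$-weighted zero-sum free subsequence and $b_j h_j = \sum_{i\in I_j} a_i g_i$ for suitable $a_i, b_j \in A$ and $I_j \subset [1,r]$. Thus the two genuinely new assertions to establish are the upper bound $|S| \le 2r$ (equivalently $\nu \le r$) and the pairwise disjointness of the index sets $I_j$. Since $|S| = r + \nu$, the bound $|S| \le 2r$ is exactly $\nu \le r$, so both remaining claims are really statements about how the $h_j$ attach to $T$. First I would fix the extremal hypothesis $N_{A,0}(S) = 2^{|S| - D_A(G) + 1} = 2^{|S| - r}$ as the quantity every counting argument must respect, and recall from the proof of Theorem \ref{0-c} that each $h_j$ (playing the role of an element $g|W$) contributes a zero-sum completion $h_j V_{h_j}$ with $V_{h_j}|T$; the extremality forces these completions to be as rigid as possible.

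The core of the argument will be a counting/injectivity analysis in the spirit of the lower-bound proof of Theorem \ref{0-c}. In that proof, every nonempty subsequence $U$ of $W = \prod_j h_j$ produces a distinct $A$-weighted zero-sum subsequence $U V_U$ of $S$, giving at least $2^{|W|}$ such subsequences; equality in $N_{A,0}(S) = 2^{|S|-r}$ means that these are essentially \emph{all} of them and that no two distinct choices collapse to the same index set. I would translate the requirement that the map $U \mapsto UV_U$ neither over-counts nor leaves room for extra zero-sum subsequences into a constraint on the supports $I_j$. Concretely, if two index sets $I_{j_1}$ and $I_{j_2}$ overlapped, one could exhibit an additional $A$-weighted zero-sum subsequence of $S$ by combining the completions of $h_{j_1}$ and $h_{j_2}$ with a cheaper subsequence of $T$ on the shared indices, thereby producing strictly more than $2^{|S|-r}$ zero-sum subsequences and contradicting extremality. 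Establishing disjointness then immediately yields $\sum_j |I_j| \le r$, and since each $I_j$ is nonempty (as $h_j \notin T$ has $o(h_j) \notin A$, its completion cannot be empty), we get $\nu \le r$, hence $|S| = r + \nu \le 2r$.

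The step I expect to be the main obstacle is precisely ruling out overlaps $I_{j_1} \cap I_{j_2} \neq \emptyset$ while keeping careful control of the weights. The subtlety is that $A$ is not closed under the operations one wants to perform: subtracting or combining weighted sums can push a coefficient onto one of the forbidden values $d^kn/d^i$ or onto a multiple of $d^kn$, so the naive \emph{swap a completion} argument need not land back inside $A$. Here the hypotheses $n$ odd, $\gcd(d,n)\le d-1$, and $d^kn \ge 6$ must be invoked exactly as in Theorem \ref{t2}, and the odd exponent in particular is what allows one to replace a coefficient $a$ by $2a$ or $-a$ and stay off the forbidden set, mirroring how the odd-order hypothesis is used in the analogous Theorem 4.2 of \cite{losm}. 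I would handle this by showing that any hypothetical overlap lets us modify a single completion $V_{h_{j_1}}$ into a genuinely different valid completion $V'_{h_{j_1}}$ with admissible weights, producing two distinct zero-sum subsequences associated to the \emph{same} $U$; this breaks the injectivity that extremality demands and gives the contradiction.

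Finally, I would assemble the pieces: Proposition \ref{formadasequencia} supplies the structural form and the lower bound $r \le |S|$, the disjointness argument supplies $\nu \le r$ and hence the upper bound $|S| \le 2r$, and the degenerate case $\nu = 0$ (no $h_j$) is recorded as $S = T$. The only remaining bookkeeping is to confirm that the disjoint $I_j$ are exactly the supports appearing in the statement, so that the relations $b_j h_j = \sum_{i\in I_j} a_i g_i$ carry over verbatim from the proposition, completing the characterization of $EC_{A,0}(\mathcal{F}(G\setminus\{0\}))$ under the stated hypotheses.
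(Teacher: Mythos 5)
Your plan follows the same overall route as the paper (start from Proposition \ref{formadasequencia}, show the $I_j$ are pairwise disjoint by extracting an extra $A$-weighted zero-sum subsequence from any overlap, then deduce $\nu\le r$), but the decisive step is only announced, not carried out, and the mechanism you propose is incomplete. Concretely: from an overlap $I_{j_1}\cap I_{j_2}\neq\emptyset$ the paper forms the \emph{two} combinations $d^k(a_{j_1}h_{j_1}+a_{j_2}h_{j_2})$ and $d^k(a_{j_1}h_{j_1}-a_{j_2}h_{j_2})$ (both legitimate weightings of $U=h_{j_1}h_{j_2}$, since $A$ is closed under negation mod $d^kn$), each equal to an $A$-weighted sum over some $I_x,I_y\subset I_{j_1}\cup I_{j_2}$. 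Your idea of "two distinct completions of the same $U$" is exactly this, but it only yields a contradiction when $I_x\neq I_y$. Two degenerate cases remain and you do not address either. First, both combinations could collapse entirely ($I_x=I_y=\emptyset$); this is excluded because it would force $2d^ka_{j_2}h_{j_2}=0$ and hence $n\mid a_{j_2}$, impossible since $d^ka_{j_2}\not\equiv 0\pmod{d^kn}$ --- this is where the hypothesis that $n$ is odd is actually used, not in "replacing $a$ by $2a$ or $-a$" as you suggest. Second, and this is the genuinely hard case, the two completions may coincide, $I_x=I_y\neq\emptyset$, in which case no new subsequence is produced by this comparison at all. The paper handles it by a separate exchange argument: pick $g_l$ with $l\in I_{j_1}\cap I_{j_2}$, form $T'=\bigl(\prod_{i=1}^{r+1}g_i\bigr)g_l^{-1}$ with $g_{r+1}=h_{j_2}$, and use $D_A(G)=r+1$ to show that whether or not $T'$ is an extremal $A$-weighted zero-sum free sequence, one obtains a representation of $h_{j_2}$ or of $h_{j_1}$ over indices avoiding $l$, hence a new zero-sum subsequence. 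Without this case your contradiction does not go through, so the disjointness claim --- and with it $|S|\le 2r$ --- is not established.

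A secondary point: your justification that each $I_j$ is nonempty ("$o(h_j)\notin A$, so its completion cannot be empty") is not sound as stated. One can have $b_jh_j=0$ with $b_j\in A$ a proper multiple of $o(h_j)$; for instance $o(h_j)=d^{k-1}n\notin A$ while $b_j=2d^{k-1}n\in A$ whenever $d\ge 3$. This nonemptiness is what converts disjointness into the bound $\nu\le r$, so it needs a real argument (or the degenerate $I_j=\emptyset$ terms need to be absorbed as in the paper's parenthetical remark).
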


\begin{proof}
Let $S$ be a sequence over $G\setminus \{0\}$, $o(g)\not\in A$ for all $g|S$ and $N_{A,0}\left(S\right)=2^{\left|S\right|-D_{A}\left(G\right)+1}=2^{|S|-r}.$ We know, by Proposition \ref{formadasequencia}, that $S=T\prod_{j=1}^{\nu}h_{j}$ where $\nu\in\mathbb{N}_0$, $b_{j}h_{j}=\sum_{i\in I_{j}}a_{i}g_{i}$ with $a_{i},b_{j}\in A$, $I_{j}\subset\left[1,r\right]$ and $T=\prod_{i=1}^{r}g_i$ is an extremal $A$-weighted zero-sum free. 

Now, we will prove that the $I_{j}$'s are pairwise disjoint. If $\left|S\right|=D_{A}\left(G\right)-1=r$, then $N_{A,0}(S)=1$, $I_{j}=\emptyset$ for all $j\in [1,\nu]$ and $S=T$. Suppose that $\left|S\right|=D_{A}\left(G\right)=r+1$ then, $I_{j}\neq\emptyset$ for only one $j$, $N_{A,0}(S)=2$ and
$S=Th_{j}$. Finally, suppose $S=T\prod_{j=1}^{\nu}h_{j}$
with $\nu\geq2$ and $I_{j_1}\cap I_{j_2}\neq\emptyset$ for some $j_1,j_2\in\left[1,\nu\right]$, with $j_1\neq j_2$ and
where \[a_{j_1}h_{j_1}=\sum_{i\in I_{j_1}}a_{i}g_{i} \mbox{ and } a_{j_2}h_{j_2}=\sum_{i\in I_{j_2}}b_{i}g_{i}\] with $a_{j_1},a_{j_2},a_i,b_{i}\in A$, since $D_A(G)=r+1$.

By hypothesis we have $\tbinom{\nu}{0}+\tbinom{\nu}{1}+\cdots+\tbinom{\nu}{\nu}=2^{\nu}=2^{\left|S\right|-r}$
$A$-weighted zero-sum subsequences of $S,$ which can be obtained as in the proof of Theorem \ref{0-c}. Since $I_{j_1}\cap I_{j_2}\neq\emptyset$, we have $I_{x}, I_{y}\subset I_{j_1}\cup I_{j_2}$ such that 
\begin{equation}\label{eq.7}
d^k(a_{j_1}h_{j_1}+a_{j_2}h_{j_2})=d^k\left(\sum_{i\in I_{x}}c_{i}g_{i}\right) \pmod{d^kn}
\end{equation}
and 
\begin{equation}\label{eq.8}
d^k(a_{j_1}h_{j_1}-a_{j_2}h_{j_2})=d^k\left(\sum_{i\in I_{y}}d_{i}g_{i}\right)\pmod{d^kn}.  
\end{equation}

Since $d^ka_{j_1},d^ka_{j_2}\pmod{d^kn} \in A$ one can easily verify that $d^kc_i\equiv0\pmod{d^kn}$ or $d^kc_i\pmod{d^kn}\in A$ and $d^kd_i\equiv0\pmod{d^kn}$ or $d^kd_i\pmod{d^kn}\in A.$ If $d^kc_i\equiv0\pmod{d^kn},$ for all $i\in I_{x}$ and $d^kd_i\equiv0\pmod{d^kn}$ for all $i\in I_{y},$ then $d^k(a_{j_1}h_{j_1}+a_{j_2}h_{j_2})=0$ and $d^k(a_{j_1}h_{j_1}-a_{j_2}h_{j_2})=0.$ But, this implies that $2d^{k}a_{j_2}h_{j_2}=0,$ and hence $n|a_{j_2}$ ($o(h_{j_2})=d^kn$ and $n$ is odd), which is a contradiction since $d^{k}a_{j_2}\not\equiv 0\pmod{d^kn}.$ Therefore, $I_{x}\neq\emptyset$ or $I_y\neq\emptyset$.

If $I_x\neq I_y$, then there is a new $A$-weighted zero-sum subsequence of $S$ and therefore $N_{A,0}\left(S\right) >2^{|S|-r}$, which is a contradiction.
Now, suppose that $I_{x}=I_y$. Consider $g_l|\prod_{i\in I_{j_1}\cap I_{j_2} }g_{i}$ (observe that $d^kc_l\not\equiv0\pmod{d^kn}$ and $d^kd_l\not\equiv0\pmod{d^kn}$ in \eqref{eq.7} and \eqref{eq.8}) and take $T'=\left(\prod_{i=1}^{r+1}g_i\right)g_l^{-1}$, where $g_{r+1}=h_{j_2}$. If $T'$ is not an extremal $A$-weighted zero-sum free, then there is $\bar{I}_{j_2}\subset \left[1,r+1\right]\backslash \left\{l\right\}$ such that $z_{j_2}h_{j_2}=\sum_{i\in \bar{I}_{j_2}}s_{i}g_{i}$, i.e., we can obtain a new $A$-weighted zero-sum subsequence of $S$ and thus $N_{A,0}\left(S\right) >2^{|S|-r}$, which is a contradiction. If $T'$ is an extremal $A$-weighted zero-sum free, then by Corollary \ref{c1} we have $\bar{I}_{j_1}\subset \left[1,r+1\right]\backslash \left\{l\right\}$ such that $v_{j_1}h_{j_1}=\sum_{i\in \bar{I}_{j_1}}u_{i}g_{i}$, i.e., we can obtain a new $A$-weighted zero-sum subsequence of $S$. Therefore, we have $N_{A,0}\left(S\right) >2^{|S|-r}$ again, which is a contradiction.

We observe that if $\nu>r$, then there are $I_{j_1}$ and $I_{j_2}$
with $j_1\neq j_2$, such that $I_{j_1}\cap I_{j_2}\neq\emptyset$. Therefore, $N_{A,0}\left(S\right)>2^{|S|-r}$. Thus, $r\leq|S|\leq 2r$.
\end{proof}

The example below shows a case that is not covered by hypotheses of Theorem \ref{thm11}. We believe that it is possible to obtain a similar theorem that covers this case.

\begin{example}
Let $S=e_1e_2e_3(2e_2)(2e_3)(3e_2)(3e_3)$ be a sequence over $G=C_2\oplus C_4^2,$ where $\{e_1,e_2,e_3\}$ is the canonical basis of $G.$ Note that $|S|=7=D_A(G)+1$ and $N_{A,0}(S)=2^{|S|-D_A(G)+1}=2^2=4,$ where $D_A(G)=6,$ with $A=\{1,3\},$  by Proposition \ref{p5}. In this case, $T=e_1e_2e_3(2e_2)(2e_3)$ is an extremal $A$-weighted zero-sum free.
\end{example}


\end{document}